\documentclass[12pt]{article}
\usepackage{amsfonts}
\usepackage{amsmath}
\usepackage{amssymb}
\usepackage{fullpage}
\usepackage{amsthm}

\newtheorem{theorem}{Theorem}
\newtheorem{proposition}[theorem]{Proposition}
\newtheorem{lemma}[theorem]{Lemma} 
\newtheorem{corollary}[theorem]{Corollary}

\newcommand{\calo}{{\mathcal O}}
\newcommand{\pdeg}{{p^\circ}}
\newcommand{\Sp}{{\hbox{\rm Sp}}}
\newcommand{\gl}{{\hbox{\rm GL}}}
\newcommand{\SL}{{\hbox{\rm SL}}}
\newcommand{\cc}{{\mathbb C}}
\newcommand{\ff}{{\mathbb F}}
\newcommand{\rr}{{\mathbb R}}
\newcommand{\zz}{{\mathbb Z}}
\newcommand{\tp}{\mathrm T}
\newcommand{\lcm}{\ensuremath{\operatorname{lcm}}}
\newcommand{\idm}{I_n}
\newcommand{\sce}{{\scriptstyle{E}}}
\newcommand{\scf}{{\scriptstyle{F}}}
\newcommand{\Ind}{\hbox{\rm Ind}}
\newcommand{\semi}{{\rtimes}}

\title{The Yagita invariant of symplectic groups of large 
rank}
\author{Cornelia M. Busch\thanks{The first author acknowledges support
    from ETH Z\"urich, which facilitated this work}\and Ian
  J. Leary\thanks{The second author would like to thank the Isaac
    Newton Institute for Mathematical Sciences for support and hospitality during the programme {\sl Non-positive Curvature, Group Actions and Cohomology}, 
when work on this paper was undertaken.  This work was supported by EPSRC grant
  no.\ EP/K032208/1 and by a grant from the Leverhulme Trust.}}

\date{\today}

\begin{document} 

\maketitle

\begin{abstract} 
Fix a prime $p$, and let $\calo$ denote a subring of $\cc$ that 
is either integrally closed or contains a primitive $p$th root 
of~1.  We determine the Yagita invariant at the prime $p$ for 
the symplectic group $\Sp(2n,\calo)$ for all $n\geq p-1$.  
\end{abstract}  

\section{Introduction} 
The Yagita invariant $\pdeg(G)$ of a discrete group $G$ is an
invariant that generalizes the period of the $p$-local Tate-Farrell
cohomology of $G$, in the following sense: it is a numerical 
invariant defined for any $G$ that is equal to the period when 
the $p$-local cohomology of $G$ is periodic.  
Yagita considered finite groups~\cite{yag}, and Thomas extended the
definition to groups of finite vcd~\cite{thomas}.  In~\cite{glt} 
the definition was extended to arbitrary groups and $\pdeg(G)$ was 
computed for $G=\gl(n,\calo)$ for $\calo$ any integrally closed subring of
$\cc$ and for sufficiently large $n$ (depending on $\calo$).  

In~\cite{busch}, one of us computed the Yagita invariant for
$\Sp(2(p+1),\zz)$.  Computations from~\cite{glt} were used to provide
an upper bound and computations with finite subgroups and with mapping
class groups were used to provide a lower bound~\cite{gmx}.  The
action of the mapping class group of a surface upon the first homology
of the surface gives a natural symplectic representation of the
mapping class group of a genus $p+1$ surface inside $\Sp(2(p+1),\zz)$.
In the current paper, we compute $\pdeg(\Sp(2n,\calo))$ for each
$n\geq p-1$ for each $\calo$ for which $\pdeg(\gl(n,\calo))$ was
computed in~\cite{glt}.  By using a greater range of finite subgroups
we avoid having to consider mapping class groups.  

Throughout the paper, we fix a prime $p$.  Before stating our main
result we recall the definitions of the symplectic group $\Sp(2n,R)$ over
a ring $R$, and of the Yagita invariant $\pdeg(G)$, which depends on
the prime $p$ as well as on the group $G$.  The group $\Sp(2n,R)$ is 
the collection of invertible $2n\times 2n$ matrices $M$ over $R$ such 
that 
\[M^\tp JM = J,\,\,\,\hbox{where}\,\,\,
J:= \begin{pmatrix}\phantom{-}0&\idm\\-\idm &0\end{pmatrix}.\] 
Here $M^\tp$ denotes the transpose of the matrix $M$, and as usual
$I_n$ denotes the $n\times n$ identity matrix.  Equivalently 
$M\in \Sp(2n,R)$ if $M$ defines an isometry of the antisymmetric 
bilinear form on $R^{2n}$ defined by $\langle x,y\rangle:=x^\tp Jy$.  
If $C$ is cyclic of order 
$p$, then the group cohomology ring $H^*(C;\zz)$ has the form
\[H^*(C;\zz)\cong \zz[x]/(px),\,\,\,\, x\in H^2(C;\zz).\] 
If $C$ is a cyclic subgroup of $G$ of order $p$, 
define $n(C)$ a positive integer or infinity to be 
the supremum of the integers $n$ such that the image of
$H^*(G;\zz)\rightarrow H^*(C;\zz)$ is contained in the subring 
$\zz[x^n]$.  Now define 
\[\pdeg(G):=\lcm\{2n(C)\,\,:\,\, C\leq G,\,\,\, |C|=p\}.\] 
It is easy to see that if $H\leq G$ then $\pdeg(H)$ divides
$\pdeg(G)$~\cite[Prop.~1]{glt}.  

\section{Results} 

In the following theorem statement and throughout the paper we let
$\zeta_p$ be a primitive $p$th root of 1 in $\cc$ and we let $\calo$
denote a subring of $\cc$ with $F\subseteq \cc$ as its field of 
fractions.  We assume that either $\zeta_p\in \calo$ or that
$\calo$ is integrally closed in $\cc$.  
We define $l:=|F[\zeta_p]:F|$, the degree of
$F[\zeta_p]$ as an extension of $F$.  For $t\in \rr$ with $t\geq 1$,
we define $\psi(t)$ to be the largest integer power of $p$ less than
or equal to $t$.

\begin{theorem} \label{thm:main}
With notation as above, for each $n\geq p-1$, the Yagita invariant 
$\pdeg(\Sp(2n,\calo))$ is equal to $2(p-1)\psi(2n/l)$ for $l$ even 
and equal to $2(p-1)\psi(n/l)$ for $l$ odd.  
\end{theorem}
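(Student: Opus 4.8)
The plan is to prove the two divisibility relations $\pdeg(\Sp(2n,\calo)) \mid 2(p-1)\psi(\cdot)$ and $2(p-1)\psi(\cdot)\mid\pdeg(\Sp(2n,\calo))$ separately. Throughout write $p^s$ for $\psi(2n/l)$ when $l$ is even and for $\psi(n/l)$ when $l$ is odd, so that $lp^{s}\le 2n<lp^{s+1}$ (respectively $lp^{s}\le n<lp^{s+1}$). Since $\pdeg$ is the least common multiple of the numbers $2n(C)$ over cyclic subgroups $C\le\Sp(2n,\calo)$ of order $p$, the upper bound amounts to showing $n(C)\mid(p-1)p^{s}$ for every such $C$, while the lower bound amounts to exhibiting finite subgroups realising the two coprime factors $p-1$ and $p^{s}$ separately; these then combine via the least common multiple. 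The lower bound is driven by monotonicity of $\pdeg$ under inclusion (\cite[Prop.~1]{glt}).

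For the upper bound, fix $C=\langle g\rangle$ of order $p$ and let $m_a$ (for $a\in\ff_p^\times$) be the multiplicity of the eigenvalue $\zeta_p^{a}$ of $g$ on the standard representation $\calo^{2n}$. The symplectic condition forces $m_a=m_{-a}$, and rationality over $\calo$ forces the $m_a$ to be constant on the orbits of the Galois group $H\le\ff_p^\times$ of order $l$. The image of $H^*(\Sp(2n,\calo);\zz)\to H^*(C;\zz)$ contains the total Chern class of the restricted standard representation, namely $\prod_{a}(1+ax)^{m_a}\in\ff_p[x]$. I would argue by contradiction: if every nonzero Chern class lay in a degree divisible by $p^{s+1}$, this product would be a $p^{s+1}$st power in $\ff_p[x]$, so by unique factorisation each $m_a$ would be divisible by $p^{s+1}$. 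Faithfulness puts at least one $H$-orbit in the support; when $l$ is even that orbit already has size $l$, while when $l$ is odd the relation $m_a=m_{-a}$ together with $-1\notin H$ forces the disjoint dual orbit into the support too, giving support size $2l$. The faithful part of $\calo^{2n}$ would then have dimension at least $lp^{s+1}$ (respectively $2lp^{s+1}$), contradicting the choice of $s$. Hence the $p$-part of $n(C)$ is at most $p^{s}$, and the prime-to-$p$ part of $n(C)$ divides $p-1$ by the inclusion $\Sp(2n,\calo)\le\gl(2n,\calo)$ together with the computation of $\pdeg(\gl(2n,\calo))$ in \cite{glt}. Thus $n(C)\mid(p-1)p^{s}$.

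For the lower bound I would produce two finite subgroups. First, $C_p\rtimes C_{p-1}$ acting on the reduced regular representation of $C_p$, which carries an invariant symplectic form preserved by the $C_{p-1}$ permuting the characters; this embeds in $\Sp(2n,\calo)$, and for its order-$p$ subgroup $C$ the image of $H^*(C_p\rtimes C_{p-1})\to H^*(C)$ is the ring of $C_{p-1}$-invariants $\ff_p[x^{p-1}]$, giving $n(C)=p-1$. Second, take an extraspecial $p$-group $P$ of order $p^{1+2s}$ and exponent $p$; its faithful irreducible representation has dimension $p^{s}$ and restricts to the centre $Z\cong C_p$ as $p^{s}$ copies of a faithful character. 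Over $\calo$ this representation has dimension $lp^{s}$ and is self-dual precisely when $l$ is even; when $l$ is odd one uses instead its sum with its dual, of dimension $2lp^{s}$. In both cases the definition of $s$ provides an embedding $P\hookrightarrow\Sp(2n,\calo)$, the hypothesis $n\ge p-1$ guaranteeing room to realise these subgroups symplectically in the required dimension. The restriction $H^*(P;\ff_p)\to H^*(Z;\ff_p)$ is concentrated in degrees divisible by $2p^{s}$, with the Euler class of the faithful representation realising $x^{p^{s}}$, so $n(Z)=p^{s}$. Since $C$ and $Z$ both lie in $\Sp(2n,\calo)$, its Yagita invariant is divisible by $\operatorname{lcm}(2(p-1),2p^{s})=2(p-1)p^{s}$.

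The main obstacle is the cohomological input for the extraspecial subgroup: verifying that the image of $H^*(P)\to H^*(Z)$ lands in $\ff_p[x^{p^{s}}]$, equivalently that no class of $P$ restricts nontrivially to the centre in a degree not divisible by $2p^{s}$. This is where the whole factor $p^{s}$ is generated, and it cannot arise from any automorphism action on a single $C_p$, which only ever yields the factor $p-1$; it is genuinely a feature of the internal cohomology of a non-abelian $p$-group. A secondary technical point, concentrated in the $l$ odd case, is the Schur-index and duality bookkeeping needed to realise the relevant $\calo$-representations symplectically in exactly the claimed dimension. By contrast, I expect the eigenvalue and unique-factorisation argument for the upper bound to be routine once the symplectic and Galois constraints on the multiplicities $m_a$ are recorded.
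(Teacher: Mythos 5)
Your overall architecture is the same as the paper's: an upper bound from the total Chern class of the standard representation restricted to each order-$p$ subgroup, exploiting the symplectic constraint $m_a=m_{-a}$ and Galois-orbit constancy of the eigenvalue multiplicities, and a lower bound from the affine group $C_p\semi C_{p-1}$ together with extraspecial groups and Yagita's computation $\pdeg(E(p,m))=2p^m$ (which the paper also simply cites from \cite{yag}, so deferring that is fine). However, three steps as written are genuinely gapped or false.

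First, your dimension count in the upper bound hinges on ``$-1\notin H$ forces the disjoint dual orbit into the support, giving support size $2l$''. For $p=2$ one has $l=1$ and $-1=1$, so the support is the single point $\{1\}$ and you only obtain $m_1\ge 2^{s+1}$, which is compatible with $m_1\le 2n$ (take $n=2^s$ and $C=\{\pm I_{2n}\}$); the paper treats $p=2$ separately, using $\det M=1$ to force the multiplicity of the eigenvalue $-1$ to be even, and you need some such additional input. Second, your first lower-bound subgroup is wrong as stated: the $(p-1)$-dimensional reduced permutation representation of $C_p\semi C_{p-1}$ is an absolutely irreducible representation realised over $\rr$, hence of orthogonal type, and carries no invariant alternating form. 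The paper instead embeds $C_p\semi C_{p-1}$ in $\gl(p-1,\zz)$ and then doubles via $A\mapsto\mathrm{diag}(A,(A^\tp)^{-1})$ into $\Sp(2(p-1),\zz)$ --- this doubling is precisely the reason for the hypothesis $n\ge p-1$. Third, and most substantially, your embedding of the extraspecial group is deferred to ``Schur-index and duality bookkeeping'', but this is where the paper does real work: self-duality of the $F$-rational faithful representation does not by itself yield a symplectic (rather than orthogonal) form, and a symplectic form over $F$ does not yield an embedding into $\Sp(\cdot,\calo)$, which requires an invariant unimodular alternating form on a free $\calo$-lattice. The paper circumvents all of this with two ingredients you are missing: B\"urgisser's theorem \cite{buerg} that $C_p$ embeds in $\Sp(2l,\calo)$ (resp.\ $\Sp(l,\calo)$ for $l$ even), and a lemma showing that inducing a symplectic $\calo H$-representation from $H\le G$ produces a symplectic $\calo G$-representation with respect to the standard form on the induced lattice; applying these to $H\cong C_p^{m+1}\le E(p,m)$ gives the faithful symplectic representation of dimension $2lp^m$ (resp.\ $lp^m$). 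Without these, or an equivalent integral argument, the lower bound is not established for general $\calo$.
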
 

By the main result of~\cite{glt}, the above is equivalent to the 
statement that $\pdeg(\Sp(2n,\calo))=\pdeg(\gl(2n,\calo))$ when $l$ is
even and $\pdeg(\Sp(2n,\calo))=\pdeg(\gl(n,\calo))$ when $l$ is odd. 
By definition $\Sp(2n,\calo)$ is a subgroup of $\gl(2n,\calo)$ and 
there is an inclusion $\gl(n,\calo)\rightarrow \Sp(2n,\calo)$ defined
by 
\[A\mapsto \begin{pmatrix}A&0\\0 &(A^\tp)^{-1}\end{pmatrix},\] 
and so for any $n$, $\pdeg(\gl(n,\calo))$ divides $\pdeg(\Sp(2n,\calo))$, 
which in turn divides $\pdeg(\gl(2n,\calo))$.  

Before we start, we recall two standard facts concerning symplectic 
matrices that will be used in the proof of Corollary~\ref{cor:upper}: 
if $M$ is in the symplectic group 
then $\det(M)=1$ and $M$ is conjugate to the inverse of its 
transpose $(M^{-1})^\tp=(M^\tp)^{-1}$.  We shall use the notation 
$\ff_p^\times$ to denote the multiplicative group of units in the 
field $\ff_p$.  

\begin{proposition} \label{prop:main} 
Let $f(X)$ be a polynomial over the field $\ff_p$ and suppose that 
$0$ is not a root of $f$ but that $f$ factors as a product of linear 
polynomials over $\ff_p$.  If there is a polynomial $g$ and an integer
$n$ so that $f(X)=g(X^n)$, then $n$ has the form $n=mp^q$ for some $m$
dividing $p-1$ and some integer $q\geq 0$.  If $p$ is odd and for 
each $i\in\ff_p^\times$, the multiplicity of $i$ as a 
root of $f$ is equal to that of $-i$, then $m$ is even.  
\end{proposition}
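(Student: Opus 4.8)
The plan is to separate the $p$-power part of $n$ from its prime-to-$p$ part and treat each by a different mechanism: the $p$-power part via the Frobenius in characteristic $p$, and the prime-to-$p$ part via a roots-of-unity argument exploiting that $f$ splits over $\ff_p$.

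First I would write $n=mp^q$ with $p\nmid m$. Since the coefficients of $g$ lie in $\ff_p$, the Frobenius identity $g(Y)^{p^q}=g(Y^{p^q})$ holds in $\ff_p[X]$, so setting $h(X):=g(X^m)$ gives $f(X)=g\bigl((X^m)^{p^q}\bigr)=h(X)^{p^q}$. Because $f$ is a product of linear factors with no root at $0$, unique factorization in $\ff_p[X]$ forces $h$ to have the same properties (its irreducible factors occur among those of $f$, hence are linear, and $0$ is not a root). This disposes of the factor $p^q$ and reduces everything to the structure of $h(X)=g(X^m)$ with $\gcd(m,p)=1$. Next, assuming $f$ is nonconstant (the degree-$0$ case being vacuous), pick a root $\alpha\in\ff_p^\times$ of $h$. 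For any $m$-th root of unity $\zeta\in\overline{\ff_p}$ one has $h(\zeta\alpha)=g\bigl((\zeta\alpha)^m\bigr)=g(\alpha^m)=h(\alpha)=0$, so $\zeta\alpha$ is again a root of $h$; since $h$ splits over $\ff_p$, we get $\zeta\alpha\in\ff_p$, whence $\zeta=(\zeta\alpha)\alpha^{-1}\in\ff_p^\times$. As $\gcd(m,p)=1$ there are exactly $m$ distinct $m$-th roots of unity, and they now all lie in the cyclic group $\ff_p^\times$ of order $p-1$; therefore $m\mid p-1$, which proves the first assertion.

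For the second assertion I would first record a dictionary between the multiplicative symmetry of the root multiset and expressibility as a polynomial in $X^M$: for $M\mid p-1$ and a root multiset supported on $\ff_p^\times$, the statement that $h(X)=\tilde g(X^M)$ for some $\tilde g$ is equivalent to the root multiplicities of $h$ being constant on the cosets of the group of $M$-th roots of unity. Indeed each such coset is exactly the (separable) root set of a polynomial $X^M-\gamma$, so constancy of multiplicities along cosets packages $h$ into factors of the form $(X^M-\gamma)^{e}$, i.e.\ into $\tilde g(X^M)$. The symmetry hypothesis, that the multiplicity of $i$ equals that of $-i$ for every $i\in\ff_p^\times$, passes from $f$ to $h$ since $f=h^{p^q}$, and says that this multiset is also invariant under multiplication by $-1$.

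The crux is then a group-generation step, which I also expect to be the main obstacle, since it forces a careful reading of the statement. If $m$ were odd, then in the cyclic group $\ff_p^\times$ the subgroup generated by the $m$-th roots of unity together with $-1$ is the group of $2m$-th roots of unity (here $\lcm(m,2)=2m$, and $2m\mid p-1$ because $p-1$ is even and $m$ is odd). By the dictionary the multiplicities would then be constant on cosets of the $2m$-th roots of unity, giving $h(X)=\hat g(X^{2m})$ and hence $f(X)=\hat g(X^{2mp^q})=\hat g(X^{2n})$. This exhibits a valid exponent $2n$ strictly larger than $n$, so the conclusion that $m$ is even must be understood for the \emph{largest} admissible $n$ --- precisely the quantity entering the definition of $n(C)$ as a supremum. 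With $n$ maximal the enlargement to $2n$ is impossible, and therefore $m$ must be even.
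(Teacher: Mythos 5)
Your argument is correct, and for the first assertion it is essentially the paper's: reduce to $q=0$ via the Frobenius identity $g(Y^{p^q})=g(Y)^{p^q}$, then deduce $m\mid p-1$ from the fact that the $m$-th roots of unity are forced into $\ff_p^\times$ (the paper counts the roots of $y_i-X^m$; you multiply a single root by roots of unity --- the same mechanism). Where you genuinely add something is the final assertion. The paper's entire proof of it is the observation that the pairing $(1-iX)(1+iX)=1-i^2X^2$ makes $f$ a polynomial in $X^2$, and it leaves implicit how this yields ``$m$ is even''. As you correctly point out, it cannot do so for an arbitrary admissible $n$: taking $n=1$ and $g=f$ always gives $m=1$, so the last sentence of the proposition is literally false unless $n$ is read as the largest exponent for which $f$ is a polynomial in $X^n$ (which exists once $f$ is non-constant, the only case of interest). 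Your coset dictionary, combined with the fact that for odd $m$ the subgroup of $\ff_p^\times$ generated by the $m$-th roots of unity and $-1$ is the group of $2m$-th roots of unity (using $p$ odd, so that $2m\mid p-1$), shows that an odd $m$ would make $f$ a polynomial in $X^{2n}$, contradicting maximality; this is exactly the step the paper elides, and the maximal reading is the one the application requires, since $n(C)$ is defined as a supremum and Corollary~\ref{cor:upper} invokes the proposition with $n=n(C)$. The underlying idea in both proofs is the same --- the involution $i\leftrightarrow-i$ is multiplication by $-1$ on the root multiset --- but yours is the more complete account of why it bounds $m$.
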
 

\begin{proof} 
The only part of this that is not contained in~\cite[Prop.~6]{glt} is 
the final statement.  Since $(1-iX)(1+iX)=1-i^2X^2$ is a polynomial in 
$X^2$, the final statement follows.  For the benefit of the reader, we 
sketch the rest of the proof.  If $n=mp^q$ where $p$ does not divide 
$m$, then $g(X^n)=g(X^m)^{p^q}$, so we may assume that $q=0$.  If
$g(Y)=0$ has roots $y_i$, then the roots of $g(X^m)=0$ are the roots
of $y_i-X^m=0$.  Since $p$ does not divide $m$, these polynomials have
no repeated roots; since their roots are assumed to lie in $\ff_p$ 
it is now easy to show that $m$ divides $p-1$.  
\end{proof} 

\begin{corollary}\label{cor:upper} 
With notation as in Theorem~\ref{thm:main}, let $G$ be a subgroup of
$\Sp(2n,F)$.  Then the Yagita invariant $\pdeg(G)$ divides the number
given for $\pdeg(\Sp(2n,\calo))$ in the statement of Theorem~\ref{thm:main}.  
\end{corollary}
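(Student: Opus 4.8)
The plan is to work directly from the definition $\pdeg(G)=\lcm\{2n(C)\}$, so that it suffices to fix a cyclic subgroup $C=\langle M\rangle\le G\le\Sp(2n,F)$ of order $p$ and show that $2n(C)$ divides the asserted number; the least common multiple over all such $C$ then does too. First I would record the arithmetic of the eigenvalues of $M$. Over $\cc$ the matrix $M$ is diagonalisable with eigenvalues $p$-th roots of unity; write $\mu(a)$ for the multiplicity of $\zeta_p^{a}$, $a\in\ff_p$, so that $\sum_a\mu(a)=2n$. Since $M$ has entries in $F$ its characteristic polynomial is $F$-rational, so $\mu$ is constant on the orbits of the Galois group $H=\mathrm{Gal}(F[\zeta_p]/F)\le\ff_p^\times$, a subgroup of order $l$. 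Fact~2 (that $M$ is conjugate to $(M^\tp)^{-1}$) shows $M$ and $M^{-1}$ have the same eigenvalues, giving $\mu(a)=\mu(-a)$; and Fact~1 ($\det M=1$) records $\sum_a a\,\mu(a)\equiv0\pmod p$. For $p$ odd this last relation already follows from the inversion symmetry, but at $p=2$, where that symmetry is vacuous, it is the substitute: it forces the multiplicity of the eigenvalue $-1$ to be even.

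Next I would produce enough cohomology of $G$ restricting nontrivially to $C$. The defining representation $G\to\gl(2n,\cc)$ supplies Chern classes in $H^*(G;\zz)$ whose restrictions to $C$ are, by the standard computation, the coefficients of
\[f(X)=\prod_{a\in\ff_p^\times}(1+aX)^{\mu(a)}\in\ff_p[X],\]
read in $H^*(C;\zz)=\zz[x]/(px)$ via $c_i\mapsto\big([X^i]f\big)\,x^{i}$. Hence the image of $H^*(G;\zz)\to H^*(C;\zz)$ already contains this Chern subring, so if the image lies in $\zz[x^{d}]$ then $f$ must lie in $\ff_p[X^{d}]$; therefore $n(C)$ divides the largest $d$ with $f(X)=g(X^{d})$. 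This $f$ has nonzero constant term and splits into linear factors over $\ff_p$, so Proposition~\ref{prop:main} applies and yields $d=mp^{q}$ with $m\mid p-1$. Writing $f=g(X^{mp^{q}})=\big(g(X^{m})\big)^{p^{q}}$ over $\ff_p$ exhibits $f$ as a $p^{q}$-th power, so every multiplicity $\mu(a)$ with $a\ne0$ is divisible by $p^{q}$.

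It remains to bound $p^{q}$ against $2n$, and this is where the symplectic symmetry earns its keep. Combining $H$-invariance with $\mu(a)=\mu(-a)$, the function $\mu$ is invariant under $K=\langle H,-1\rangle\le\ff_p^\times$, which acts freely by multiplication, so every orbit in $\ff_p^\times$ has size $|K|$. As $\mu$ is constant and $p^{q}$-divisible on each orbit, $|K|\,p^{q}$ divides $\deg f\le 2n$. Now $-1\in H$ precisely when $l$ is even (and automatically when $p=2$), so $|K|=l$ and $p^{q}\le 2n/l$, whence $p^{q}\mid\psi(2n/l)$; whereas for $p$ odd with $l$ odd one has $-1\notin H$, so $|K|=2l$ and $p^{q}\le n/l$, whence $p^{q}\mid\psi(n/l)$. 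In both cases $d=mp^{q}$ divides $(p-1)\psi(\,\cdot\,)$, and therefore $2n(C)$ divides $2(p-1)\psi(\,\cdot\,)$, as required.

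The hard part is the halving that produces $\psi(n/l)$ rather than $\psi(2n/l)$ in the odd case: it rests entirely on the inversion symmetry $\mu(a)=\mu(-a)$ of Fact~2 enlarging the governing orbit from size $l$ to $2l$, and getting this genuinely sharp (rather than off by a factor of $p$) is the crux. For $l$ even the weaker bound can instead be read off at once from $G\le\gl(2n,F)$ together with the computation of $\pdeg(\gl(2n,F))$ in~\cite{glt}, so only the odd case needs the eigenvalue analysis above. I expect the prime $2$ to be the most delicate point, since there $-1=1$ and the inversion symmetry collapses; it falls under the $-1\in H$ case, and it is exactly the determinant condition of Fact~1 (equivalently, that the $(-1)$-eigenspace of the involution $M$ is a symplectic, hence even-dimensional, subspace) that keeps the bookkeeping honest there.
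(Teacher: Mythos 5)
Your argument for odd $p$ is correct and is in substance the paper's own: restrict the total Chern class of the defining representation to $C$, constrain it by Galois invariance and by the self-duality $M\sim (M^{\tp})^{-1}$, and apply Proposition~\ref{prop:main}. Packaging this through the eigenvalue multiplicities $\mu$ and the free action of $K=\langle H,-1\rangle$ on $\ff_p^\times$ is a clean way to make the key divisibility $|K|\,p^q\mid\deg f\le 2n$ explicit, and it does deliver the sharp bound $\psi(n/l)$ when $p$ and $l$ are both odd.

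The case $p=2$ is a genuine gap, and your closing appeal to the determinant condition does not close it. The condition $\det M=1$ gives $2\mid\mu(1)$, and the $p^q$-th power argument gives $2^q\mid\mu(1)$; together these yield only $2^{\max(1,q)}\mid\mu(1)\le 2n$, hence $n(C)\mid\psi(2n)=2\psi(n)$, a factor of $2$ short of the bound $n(C)\mid\psi(n)$ that the statement requires (for $p=2$ one has $l=1$, which is odd, so the asserted value is $2(p-1)\psi(n/l)=2\psi(n)$). This loss cannot be recovered by any refinement of the method, because the statement itself fails at $p=2$: take $G=Q_8\le\SL(2,\zz[i])=\Sp(2,\zz[i])$, so that $n=1$ and $l=1$. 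The image of $H^*(Q_8;\zz)$ in $H^*(Z(Q_8);\zz)\cong\zz[x]/(2x)$ is the subring generated by $x^2$ (the degree-$2$ part of the image vanishes because every one-dimensional representation of $Q_8$ kills the centre, while the periodicity class in $H^4(Q_8;\zz)$ restricts to $x^2$), so $\pdeg(Q_8)=4$, which does not divide $2(p-1)\psi(n/l)=2$. The paper's own treatment of $p=2$ has the same defect: knowing only that the total Chern class is a polynomial in $x^2$ of degree at most $2n$ likewise bounds $n(C)$ by $2\psi(n)$ and no better. What your argument actually establishes at $p=2$, namely $n(C)\mid\psi(2n)$, appears to be the correct bound there.
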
 

\begin{proof} 
As in~\cite[Cor.~7]{glt}, for each $C\leq G$ of order $p$, we use the
total Chern class to give an upper bound for the number $n(C)$
occurring in the definition of $\pdeg(G)$.  If $C$ is cyclic of order
$p$, then $C$ has $p$ distinct irreducible complex representations,
each 1-dimensional.  If we write $H^*(C;\zz)=\zz[x]/(px)$, then the
total Chern classes of these representations are $1+ix$ for each $i\in
\ff_p$, where $i=0$ corresponds to the trivial representation.  The
total Chern class of a direct sum of representations is the product of
the total Chern classes, and so when viewed as a polynomial in
$\ff_p[x]=H^*(C;\zz)\otimes \ff_p$, the total Chern class of any
faithful representation $\rho:C\rightarrow \gl(2n,\cc)$ is a
non-constant polynomial of degree at most $2n$ all of whose roots lie
in $\ff_p^\times$.  Now let $F$ be a subfield of $\cc$ with
$l=|F[\zeta_p]:F|$ as in the statement.  The group $C$ has $(p-1)/l$
non-trivial irreducible representations over $F$, each of
dimension~$l$, and the total Chern classes of these representations
have the form $1-ix^l$, where $i$ ranges over the $(p-1)/l$ distinct
$l$th roots of unity in $\ff_p$.  In particular, the total Chern class
of any representation $\rho:C\rightarrow \gl(2n,F)\leq \gl(2n,\cc)$ is
a polynomial in $x^l$ whose $x$-degree is at most $2n$.  If $\rho$ has
image contained in $\Sp(2n,\cc)$, then it factors as 
$\rho = \iota\circ \widetilde\rho$ 
with $\widetilde\rho:C\rightarrow\Sp(2n,\cc)$ and 
$\iota$ is the inclusion of $\Sp(2n,\cc)$ in $\gl(2n,\cc)$. 
In this case the matrix representing a
generator for $C$ is conjugate to the transpose of its own inverse; in
particular it follows that the multiplicities of the irreducible
complex representations of $C$ with total Chern classes $1+ix$ and
$1-ix$ must be equal for each $i$. Hence in this case, if $p$ is odd, 
the total Chern class of the representation 
$\rho=\iota\circ\widetilde\rho$ is a polynomial in $x^2$.  
If $p=2$ (which implies that $l=1$) then the total Chern class of any
representation $\rho:C\rightarrow \gl(2n,\cc)$ has the form $(1+x)^i$,
where $i$ is equal to the number of non-trivial irreducible summands.  
Since $\Sp(2n,\cc)\leq \SL(2n,\cc)$ it follows that for symplectic
representations $i$ must be even, and so, for $p=2$  
the total Chern class is a polynomial in $x^2$.

In summary, let $\widetilde\rho$ be a faithful representation of $C$ in
$\Sp(2n,F)$.  In the case when  
$l$ is odd, then the total Chern class of $\widetilde\rho$ is a 
non-constant polynomial 
$\tilde f(y)=f(x)$ in $y=x^{2l}$ such that $f(x)$ has degree at most
$2n$, $\tilde f(y)$ has degree at most $n/l$, and all roots of
$f,\tilde f$ lie in $\ff_p^\times$.  In the case when $l$ is even, the 
total Chern class of $\rho$ is a non-constant polynomial $\tilde f(y)=f(x)$
in $y=x^l$ such that $f(x)$ has degree at most $2n$, $\tilde f(y)$ has 
degree at most $2n/l$, and all roots of both lie in $\ff_p^\times$.  
By Proposition~\ref{prop:main}, it follows that each $n(C)$ is a
factor of the number given for $\pdeg(\Sp(2n,\calo))$, and hence the 
claim. 
\end{proof} 

\begin{lemma}\label{lem:induced}
Let $H\leq G$ with $|G:H|=m$, and let $\rho$ be a symplectic
representation of $H$ on $V=\calo^{2n}$.  The induced representation 
$\Ind_H^G(\rho)$ is a symplectic representation of $G$ on
$W:=\calo G\otimes_{\calo H}V\cong\calo^{2mn}$.  
\end{lemma}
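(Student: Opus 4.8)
The plan is to equip $W$ with a $G$-invariant symplectic form obtained by transporting the form on $V$ across a set of coset representatives, and then to verify invariance, alternation, and non-degeneracy directly. Write $\langle\cdot,\cdot\rangle_V$ for the non-degenerate alternating $\calo$-bilinear form on $V$ preserved by $\rho(H)$. I would choose left coset representatives $g_1,\dots,g_m$ so that $G=\bigsqcup_{i=1}^m g_iH$, giving the $\calo$-module decomposition $W=\bigoplus_{i=1}^m g_i\otimes V$. Each $w\in W$ then has a unique expression $w=\sum_i g_i\otimes v_i$ with $v_i\in V$, and I would define
\[
\langle w,w'\rangle_W := \sum_{i=1}^m \langle v_i,v_i'\rangle_V,
\]
so that the summands $g_i\otimes V$ are pairwise orthogonal and the form restricts on each to a copy of $\langle\cdot,\cdot\rangle_V$. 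Being an orthogonal direct sum of copies of a non-degenerate alternating form, $\langle\cdot,\cdot\rangle_W$ is itself non-degenerate and alternating, and $W\cong\calo^{2mn}$ carries a symplectic form of the correct rank.

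The one substantive point is $G$-invariance. For $g\in G$, left multiplication $g_iH\mapsto gg_iH$ permutes the cosets, so there is a permutation $\sigma$ of $\{1,\dots,m\}$ and elements $h_i\in H$ with $gg_i=g_{\sigma(i)}h_i$. Using the relation across $\calo H$,
\[
g\cdot(g_i\otimes v_i)=gg_i\otimes v_i=g_{\sigma(i)}h_i\otimes v_i=g_{\sigma(i)}\otimes(\rho(h_i)v_i),
\]
so the $g_{\sigma(i)}$-coordinate of $g\cdot w$ is $\rho(h_i)v_i$. Reindexing the sum by $j=\sigma(i)$ and using that each $\rho(h_i)$ preserves $\langle\cdot,\cdot\rangle_V$ gives
\[
\langle g w,g w'\rangle_W=\sum_i\langle \rho(h_i)v_i,\rho(h_i)v_i'\rangle_V=\sum_i\langle v_i,v_i'\rangle_V=\langle w,w'\rangle_W,
\]
which is the required invariance. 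I expect the main obstacle to be purely bookkeeping: pairing the coset permutation $\sigma$ with the correct $h_i$ and confirming that the reindexing is a genuine bijection of $\{1,\dots,m\}$, after which the $H$-invariance of $\langle\cdot,\cdot\rangle_V$ does all the work.

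Finally I would note that the construction does not in fact depend on the chosen representatives: replacing each $g_i$ by $g_ih_i$ with $h_i\in H$ replaces the coordinate $v_i$ by $\rho(h_i)^{-1}v_i$, leaving every term $\langle v_i,v_i'\rangle_V$ unchanged. As an alternative, more invariant argument, a symplectic form on $V$ amounts to an $H$-isomorphism $V\to V^*$; since induction agrees with coinduction for the finite-index subgroup $H$, this induces a $G$-isomorphism $W\to W^*$, yielding a $G$-invariant non-degenerate form, whose alternation can then be checked on the summands $g_i\otimes V$.
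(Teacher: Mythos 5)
Your proposal is correct and follows essentially the same route as the paper: the same orthogonal-direct-sum form over a left transversal, and the same verification of $G$-invariance via the permutation of cosets and the $H$-invariance of the form on $V$. The only cosmetic difference is that the paper finishes by writing down an explicit basis in which the form on $W$ is the standard matrix $J$, whereas you appeal to the (immediate) fact that an orthogonal sum of standard symplectic summands is standard after reordering basis vectors.
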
 

\begin{proof} 
Let $e_1,\ldots,e_n,f_1,\ldots,f_n$ be the standard basis for
$V=\calo^{2n}$, so that the bilinear form $\langle v,w\rangle:= 
v^\tp J w$ on $V$ is given by  
\[\langle e_i,e_j\rangle = 0 = \langle f_i,f_j\rangle,\,\,\,\, 
\langle e_i,f_j\rangle = -\langle f_i,e_j\rangle= \delta_{ij}.\] 
The representation $\rho$ is symplectic if and only if each $\rho(h)$ 
preserves this bilinear form.  

Let $t_1,\ldots,t_m$ be a left transversal to $H$ in $G$, so that  
$\calo G= \oplus_{i=1}^m t_i\calo H$ as right $\calo H$-modules.  
Define a bilinear form $\langle\,\, ,\,\,\rangle_W$ on $W$ by 
\[\left\langle \sum_{i=1}^m t_i\otimes v^i, \sum_{i=1}^m t_i\otimes
w^i\right\rangle_W := \sum_{i=1}^m \langle v^i,w^i\rangle.\] 
To see that this bilinear form is preserved by the $\calo G$-action 
on $W$, fix $g\in G$ and define a permutation $\pi$ of 
$\{1,\ldots,m\}$ and elements $h_1,\ldots, h_m\in H$ by the equations
$gt_i=t_{\pi(i)}h_i$.  Now for each $i,j$ with $1\leq i,j\leq m$  
\begin{align*}
\left\langle \Ind(\rho(g))t_i\otimes v, \Ind(\rho(g))t_j\otimes w
\right\rangle_W 
&= \left\langle t_{\pi(i)}\otimes \rho(h_i)v,
t_{\pi(j)}\otimes \rho(h_j)w \right\rangle_W \\
&=\delta_{\pi(i)\pi(j)} \langle
\rho(h_i)v,\rho(h_j)w\rangle\\
&= \delta_{ij}\langle \rho(h_i)v,\rho(h_i)w\rangle \\
&=\delta_{ij} \langle v,w\rangle\\ 
&= \langle t_i\otimes v,t_j\otimes w\rangle_W.
\end{align*}
To see that $\langle\,\, ,\,\, \rangle_W$ is symplectic, define basis
elements $\sce_1,\ldots,\sce_{mn},\scf_1,\ldots,\scf_{mn}$ 
for $W$ by the equations 
\[\sce_{n(i-1)+j}:=t_i\otimes
  e_j,\,\,\,\hbox{and}\,\,\,\scf_{n(i-1)+j}:= t_i\otimes f_j,\,\,\,\,\,
\hbox{for}\,\,1\leq i\leq m,\,\,1\leq j\leq n.\]  
It is easily checked that for $1\leq i,j\leq mn$ 
\[\langle \sce_i,\sce_j\rangle_W = 0 = \langle \scf_i,\scf_j\rangle_W,\,\,\,\, 
\langle \sce_i,\scf_j\rangle_W = -\langle \scf_i,\sce_j\rangle_W= \delta_{ij},\] 
and so with respect to this basis for $W$, the bilinear form $\langle \,\,
,\,\,\rangle_W$ is the standard symplectic form. 
\end{proof}

\begin{proposition} \label{prop:lower}
With notation as in Theorem~\ref{thm:main}, 
the Yagita invariant $\pdeg(\Sp(2n,\calo))$ is divisible by the number 
given in the statement of Theorem~\ref{thm:main}.  
\end{proposition}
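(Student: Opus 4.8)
The plan is to bound $\pdeg(\Sp(2n,\calo))$ below by producing finite subgroups and invoking the divisibility $\pdeg(\Gamma)\mid\pdeg(\Sp(2n,\calo))$ recorded in the introduction. For an order-$p$ subgroup $C\le\Gamma$ the quantity $2n_\Gamma(C)$ (the value of $2n(C)$ of the definition, computed in $\Gamma$) divides $\pdeg(\Gamma)$, hence divides $\pdeg(\Sp(2n,\calo))$; so it suffices to exhibit finite $\Gamma\le\Sp(2n,\calo)$ with $n_\Gamma(C)$ as large as required. I would write the target as $2(p-1)p^\kappa$, where $p^\kappa=\psi(2n/l)$ for $l$ even and $p^\kappa=\psi(n/l)$ for $l$ odd. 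Since $\gcd(p-1,p^\kappa)=1$, it is enough to realize the two factors by separate subgroups and combine by lowest common multiple: a $\Gamma$ with $p-1\mid n_\Gamma(C)$ contributes $2(p-1)$, a $\Gamma'$ with $p^\kappa\mid n_{\Gamma'}(C')$ contributes $2p^\kappa$, and $\lcm(2(p-1),2p^\kappa)=2(p-1)p^\kappa$.

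For the factor $p-1$ I would take the holomorph $\Gamma_1=C\semi C_{p-1}$, with $C_{p-1}=\mathrm{Aut}(C)$ acting faithfully, realized on $V\oplus V^*$ where $V$ is the reduced regular representation of $C$ over $\mathbb Q\subseteq\calo$; the hyperbolic form on $V\oplus V^*$ is alternating and $\Gamma_1$-invariant, so this is symplectic of dimension $2(p-1)\le 2n$, which is exactly where the hypothesis $n\ge p-1$ enters. As $C_{p-1}$ acts on $H^2(C;\zz)=\ff_p\,x$ through all of $\ff_p^\times$, the image of $H^*(\Gamma_1;\zz)\to H^*(C;\zz)$ lies in $\zz[x^{p-1}]$, so $n_{\Gamma_1}(C)\ge p-1$. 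For the factor $p^\kappa$ (taking $p$ odd; the case $p=2$, where $l=1$, is handled separately as in Corollary~\ref{cor:upper}) I would take $\Gamma_2$ to be the extraspecial group of order $p^{1+2\kappa}$, whose centre is $C$. The cohomological input I need is the standard computation that $H^*(\Gamma_2;\zz)\to H^*(C;\zz)$ has image the subring generated by $x^{p^\kappa}$, so that $n_{\Gamma_2}(C)=p^\kappa$. By Roquette's theorem its faithful irreducible is realizable over $F[\zeta_p]$; the resulting $F$-form $W_F$ has dimension $lp^\kappa$ and restricts to $C$ as $p^\kappa$ copies of an $l$-dimensional irreducible.

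It remains to place $\Gamma_2$ in $\Sp(2n,\calo)$, and this is the only delicate point. When $l$ is odd, $W_F$ is not self-dual, so I would use $W_F\oplus W_F^*$ with its hyperbolic symplectic form; by Lemma~\ref{lem:induced} this is symplectic, of dimension $2lp^\kappa\le 2n$ because $lp^\kappa\le n$. (For $l$ odd one may alternatively bypass $\Gamma_2$ entirely, using the embedding $\gl(n,\calo)\hookrightarrow\Sp(2n,\calo)$ of the introduction together with the value of $\pdeg(\gl(n,\calo))$ from~\cite{glt}.) When $l$ is even the budget gives only $lp^\kappa\le 2n$, so doubling is unavailable and I must use $W_F$ itself in dimension $lp^\kappa$. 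The hypothesis that $l$ is even says precisely that $-1$ lies in the image $\langle\omega\rangle\subseteq\ff_p^\times$ of $\mathrm{Gal}(F[\zeta_p]/F)$, whence $W_F\cong W_F^*$ and $W_F$ carries a nondegenerate invariant form. The main obstacle is to show this form may be taken alternating rather than symmetric, so that $\Gamma_2\le\Sp(lp^\kappa,\calo)\le\Sp(2n,\calo)$; I would settle it by a Frobenius–Schur indicator computation, presenting $W_F$ through Lemma~\ref{lem:induced} as induced from an abelian subgroup on which $-1\in\langle\omega\rangle$ renders the relevant character block symplectic, so that the induced form is alternating. Granting this, both $2p^\kappa$ and $2(p-1)$ divide $\pdeg(\Sp(2n,\calo))$, hence so does $2(p-1)p^\kappa$, which is the asserted lower bound.
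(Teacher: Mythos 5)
Your overall strategy is the same as the paper's: the factor $2(p-1)$ comes from the affine group $C_p\semi C_{p-1}$ acting on the reduced regular representation, placed symplectically via $\gl(p-1,\zz)\hookrightarrow\Sp(2(p-1),\zz)$ (this is where $n\ge p-1$ is used), and the $p$-power factor comes from an extraspecial group of order $p^{2\kappa+1}$, embedded symplectically by inducing a symplectic representation from an abelian subgroup using Lemma~\ref{lem:induced}. Your dimension counts ($2lp^{\kappa}\le 2n$ for $l$ odd, $lp^{\kappa}\le 2n$ for $l$ even) match the paper's. The difference is in how the one genuinely delicate step is handled, and there your argument has a real gap.

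The step you flag as the main obstacle --- for $l$ even, producing an \emph{alternating} invariant form on the relevant representation so that the extraspecial group lands in $\Sp(lp^{\kappa},\calo)$ --- is not settled by a Frobenius--Schur indicator computation. Such a computation works over $\cc$, or with more care over the field $F$: it can show that the $l$-dimensional irreducible of $C_p$ over $F$ is self-dual and supports a nondegenerate invariant alternating form \emph{over $F$}. But the proposition is about $\Sp(2n,\calo)$ for a subring $\calo\subseteq\cc$, so you must exhibit a $C_p$-invariant $\calo$-lattice on which the form is equivalent to the standard symplectic form over $\calo$; the natural lattices are ideals of $\calo[\zeta_p]$ and unimodularity of the trace form on them is a nontrivial integrality question. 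This cannot be dismissed, since the case $l$ even arises precisely when $\calo$ is assumed integrally closed rather than containing $\zeta_p$. The paper closes exactly this gap by citing B\"urgisser's theorem that $C_p$ embeds in $\Sp(l,\calo)$ for $l$ even (and in $\Sp(2l,\calo)$ in general) when $\calo$ is integrally closed; once that is granted, composing with a homomorphism $C_p^{\kappa+1}\rightarrow C_p$ and applying Lemma~\ref{lem:induced} gives everything else, just as you describe. A minor secondary point: for the $2(p-1)$ factor you only establish that the image of restriction is \emph{contained} in $\zz[x^{p-1}]$, giving $n_{\Gamma_1}(C)\ge p-1$; to conclude $2(p-1)\mid\pdeg$ you need $p-1\mid n_{\Gamma_1}(C)$, which follows from the standard fact that the image is \emph{exactly} the subring generated by $x^{p-1}$ (so that $n_{\Gamma_1}(C)$ divides, hence equals, $p-1$).
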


\begin{proof}
To give lower bounds for $\pdeg(\Sp(2n,\calo))$ we use finite
subgroups.  Firstly, consider the semidirect product $H=C_p\semi
C_{p-1}$, where $C_{p-1}$ acts faithfully on $C_p$; equivalently this
is the group of affine transformations of the line over $\ff_p$.  It
is well known that the image of $H^*(G;\zz)$ inside $H^*(C_p;\zz)\cong
\zz[x]/(px)$ is the subring generated by $x^{p-1}$.  It follows that
$2(p-1)$ divides $\pdeg(G)$ for any $G$ containing $H$ as a subgroup.
The group $H$ has a faithful permutation action on $p$ points, and
hence a faithful representation in $\gl(p-1,\zz)$, where $\zz^{p-1}$
is identified with the kernel of the $H$-equivariant map
$\zz\{1,\ldots, p\}\rightarrow \zz$.  Since $\gl(p-1,\zz)$ embeds in
$\Sp(2(p-1),\zz)$ we deduce that $H$ embeds in $\Sp(2n,\calo)$ for
each $\calo$ and for each $n\geq p-1$.

To give a lower bound for the $p$-part of $\pdeg(\Sp(2n,\calo))$ we
use the extraspecial $p$-groups.  For $p$ odd, let $E(p,1)$ be the
non-abelian $p$-group of order $p^3$ and exponent $p$, and let
$E(2,1)$ be the dihedral group of order 8.  (Equivalently in each case
$E(p,1)$ is the Sylow $p$-subgroup of $\gl(3,\ff_p)$.)  For $m\geq 2$,
let $E(p,m)$ denote the central product of $m$ copies of $E(p,1)$, so
that $E(p,m)$ is one of the two extraspecial groups of order
$p^{2m+1}$.  Yagita showed that $\pdeg(E(p,m))=2p^m$ for each $m$ and
$p$~\cite{yag}.  The centre and commutator subgroup of $E(p,m)$ are
equal and have order $p$, and the abelianization of $E(p,m)$ is
isomorphic to $C_p^{2m}$.  The irreducible complex representations of
$E(p,m)$ are well understood: there are $p^{2m}$ distinct
1-dimensional irreducibles, each of which restricts to the centre as
the trivial representation, and there are $p-1$ faithful
representations of dimension $p^m$, each of which restricts to the
centre as the sum of $p^m$ copies of a single (non-trivial)
irreducible representation of $C_p$.  The group $G=E(p,m)$ contains a
subgroup $H$ isomorphic to $C_p^{m+1}$, and each of its faithful
$p^m$-dimensional representations can be obtained by inducing up a
1-dimensional representation $H\rightarrow C_p\rightarrow \gl(1,\cc)$.

According to B\"urgisser, $C_p$ embeds in $\Sp(2l,\calo)$ (resp.~in
$\Sp(l,\calo)$ when $l$ is even) provided that $\calo$ is integrally
closed in $\cc$~\cite{buerg}.  Here as usual, $l:=|F[\zeta_p],F|$ and
$F$ is the field of fractions of $\calo$.
 If instead $\zeta_p\in \calo$, then $l=1$ and clearly $C_p$ embeds 
in $\gl(1,\calo)$ and hence also in $\Sp(2,\calo)=\Sp(2l,\calo)$.  
Taking this embedding of $C_p$ and composing it with any homomorphism 
$H\rightarrow C_p$ we get a symplectic representation $\rho$ of $H$ 
on $\calo^{2l}$ for any $l$ (resp.~on $\calo^l$ for $l$ even).  For 
a suitable homomorphism we know that $\Ind_H^G(\rho)$ is a faithful
representation of $G$ on $\calo^{2lp^m}$ (resp.~on $\calo^{lp^m}$ for 
$l$ even) and by Lemma~\ref{lem:induced} we see that $\Ind_H^G(\rho)$ 
is symplectic.  Hence we see that $E(m,p)$ embeds as a subgroup of 
$\Sp(2lp^m,\calo)$ for any $l$ and as a subgroup of $\Sp(lp^m,\calo)$ in 
the case when $l$ is even.  Since $\pdeg(E(m,p))=2p^m$, this shows
that $2p^m$ divides $\pdeg(\Sp(2lp^m,\calo))$ always and that 
$2p^m$ divides $\pdeg(\Sp(lp^m,\calo))$ in the case when $l$ is even.  
\end{proof} 

Corollary~\ref{cor:upper} and Proposition~\ref{prop:lower} together
complete the proof of Theorem~\ref{thm:main}.  

We finish by pointing out that we have not computed $\pdeg(\Sp(2n,\calo))$ 
for general $\calo$ when $n<p-1$; to do this one would have to know
which metacyclic groups $C_p\semi C_k$ with $k$ coprime to $p$ admit 
low-dimensional symplectic representations.

\leftline{\bf Authors' addresses:}

\obeylines

\smallskip
{\tt cornelia.busch@math.ethz.ch} 

\smallskip
Department of Mathematics
ETH Z\"urich
R\"amistrasse 101 
8092 Z\"urich 
Switzerland

\smallskip
{\tt i.j.leary@soton.ac.uk}

\smallskip
School of Mathematical Sciences 
University of Southampton
Southampton
SO17 1BJ 
United Kingdom

\end{document}